\newcommand{\losemi}{{\otimes \kern -.78em \ltimes}}
\newcommand{\rosemi}{{\otimes \kern -.78em \rtimes}}
\newcommand{\leqnomode}{\tagsleft@true}
\newcommand{\reqnomode}{\tagsleft@false}
\newtheorem{theorem}{Theorem}[subsection]
\let\c@fact\c@theorem\makeatother
\let\c@note\c@theorem\makeatother
\newtheorem{lemma}{Lemma}[subsection]
\let\c@lemma\c@theorem\makeatother
\let\c@lemma\c@theorem\makeatother
\newtheorem{quest}{Question}[subsection]
\let\c@alg\c@theorem\makeatother
\newtheorem{prop}{Proposition}[subsection]
\let\c@prop\c@theorem\makeatother
\let\c@conj\c@theorem\makeatother
\newtheorem{cor}{Corollary}[subsection]
\let\c@cor\c@theorem\makeatother
\let\c@defn\c@theorem\makeatother
\theoremstyle{definition}
\newtheorem{remark}{Remark}[subsection]
\let\c@remark\c@theorem\makeatother
\let\c@example\c@theorem\makeatother
\numberwithin{equation}{subsection}
\crefname{theorem}{Theorem}{Theorems}
\crefname{fact}{Fact}{Facts}
\crefname{note}{Note}{Notes}
\crefname{lemma}{Lemma}{Lemmas}
\crefname{alg}{Algorithm}{Algorithms}
\crefname{remark}{Remark}{Remarks}
\crefname{example}{Example}{Examples}
\crefname{prop}{Proposition}{Propositions}
\crefname{conj}{Conjecture}{Conjectures}
\crefname{cor}{Corollary}{Corollaries}
\crefname{defn}{Definition}{Definitions}
\crefname{equation}{\!\!}{\!\!} 
\newcounter{listequation}
\begin{document}
\title{On the existence of mock injective modules for algebraic groups}

\author{William D. Hardesty}
\address{Department of Mathematics \\
          University of Georgia \\
          Athens, GA 30602}
\email{hardes1@uga.edu}
\thanks{Research of the first author was partially supported by NSF RTG grant  DMS-1344994}
\author{Daniel K. Nakano}
\address{Department of Mathematics \\
          University of Georgia \\
          Athens, GA 30602}
\thanks{Research of the second author was partially supported by NSF grants
DMS-1402271 and DMS-1701768}\
\email{nakano@math.uga.edu}
\author{Paul Sobaje}
\address{Department of Mathematics \\
          University of Georgia \\
          Athens, GA 30602}
\email{sobaje@uga.edu}
\thanks{Research of the third author was partially supported by NSF RTG grant  DMS-1344994}
\date{\today}
\subjclass[2010]{Primary 20G05, 20J06; Secondary 18G05}

\begin{abstract} Let $G$ be an affine algebraic group scheme over an algebraically closed field $k$ of characteristic $p>0$, and let $G_r$ denote the $r$-th Frobenius kernel of $G$.  
Motivated by recent work of Friedlander, the authors investigate the class of mock injective $G$-modules, which are defined to be those rational $G$-modules that are injective on restriction to $G_r$ for all $r\geq 1$.  
In this paper the authors provide necessary and sufficient conditions for the existence of non-injective mock injective $G$-modules, thereby answering a question raised by Friedlander. 
Furthermore, the authors investigate the existence of non-injective mock injectives with simple socles. Interesting cases are discovered that show that this can occur for reductive groups, but 
will not occur for their Borel subgroups. 
\end{abstract}

\maketitle

\section{Introduction}

\subsection{}
Let $k$ be an algebraically closed field and $G$ be an affine algebraic group scheme over $k$, defined over the prime subfield $\mathbb{F}_p$.  The $\mathbb{F}_p$-structure on $G$ admits 
a Frobenius morphism $F:G \rightarrow G$.  Let $G_r$ denote the kernel of the $r$th iteration of the Frobenius map, $F^r$, and let $G(\mathbb{F}_q)$ denote the $F^r$-fixed point subgroup of $G(k)$, where $q=p^r$.  A fundamental problem in the representation theory of $G$ is to determine the extent to which the finite subgroup schemes $G_r$ and $G(\mathbb{F}_q)$ detect the cohomology of $G$.  For semi-simple groups, the relationship between $G$, $G_r$, and $G(\mathbb{F}_q)$ cohomology has been studied extensively (see \cite{And1,And2}, \cite{BNP1, BNP2, BNP3, BNP4, BNP5}, \cite{CPS}, 
\cite{CPSvdK}, and \cite{N} for a survey).

Recently this problem has arisen in a new context.  In \cite{F1}, Friedlander defined a support theory for rational $G$-modules when $G$ has a particular ``exponential" structure, a condition met by many important classes of groups.  The aforementioned work is a generalization of the support variety theory that has been developed over the last several decades for finite group schemes and related algebraic objects, of which $G_r$ and $G(\mathbb{F}_q)$ are particular examples.  Among the many important features of support varieties in the finite group scheme setting is their ability to detect the injectivity of a module (which is equivalent to detecting projectivity in this case).  A natural question in \cite{F1, F2} that is investigated is whether or not the newly introduced rational supports 
detect injectivity for rational $G$-modules.  

In \cite{F2}, it is shown that a $G$-module $M$ has trivial rational support precisely when $M$ is injective over every Frobenius kernel of $G$.  Such modules are dubbed to be ``mock injective.''  Friedlander then proved that mock injectivity is in general a weaker condition than injectivity.  In particular, he established the existence of non-injective mock injective modules for $G$ in cases when $G$ is not a reductive group using the following argument.  Embed $G$ into $GL_n$ for some $n$.  The coordinate algebra $k[GL_n]$ is injective over every finite subgroup scheme of $GL_n$, hence over $G_r$ for every $r$.  On the other hand, $k[GL_n]$ is injective over the reduced subgroup scheme $G$ if and only if $G$ is reductive.  
Consequently, there exists non-injective mock injective $G$-modules.  Such modules are referred to as ``proper mock injective modules.'' The example above leads to a clear question: for arbitrary $G$, do there exist proper mock injective modules?  In this paper we provide a definitive answer to this question via the construction of new families of examples of proper mock injective modules.

\subsection{} A key tool in our study of injective $G$-modules is the induction functor \cite[\S I.3]{J}.  If $H \le G$, then any injective $H$-module induces up to an injective $G$-module.  A particularly useful converse to this, which we will exploit in this paper, is 
if $H$ is a finite subgroup scheme of $G$, then inducing the trivial $H$-module $k$ up to $G$ produces an injective $G$-module if and only if $k$ is injective over $H$.  Induced modules of this form, when $G$ is semi-simple and $H$ is a finite Chevalley subgroup, have featured prominently in the work of Bendel, Nakano, and Pillen (see, for example, \cite[\S 2.1]{BNP3}). 

The paper is organized as follows. In the following section (Section 2), for arbitrary affine group schemes $G$ we generalize the BNP-induced modules by inducing from finite subgroup schemes 
to construct mock injective modules. With these constructions, it is shown when proper mock injective modules arise. This leads us to give a necessary and sufficient condition on $G$ 
(cf. Theorem~\ref{thm:main}) for the existence of proper mock injective modules. Our investigation leads to several natural questions which are presented in this section. Finally, we demonstrate that our 
constructions (for mock injectives) in this section can be used to show that a formation of the known Parshall Conjecture (cf. \cite{LN, D}) for infinitely generated modules is false. 

In Section 3, we explore the situation when a given mock injective module has finitely many composition factors in its socle. In the case when $G$ is reductive, we construct 
a proper mock injective with the socle isomorphic to the trivial module. On the other hand, using general results about mock injectives for parabolics, we prove that for 
a Borel subgroup of a reductive group, any mock injective with finite-dimensional socle must be injective. Finally we prove that for $G$ reductive, a mock injective $G$-module 
with a simple $G$-socle is injective over $G$ if and only if it has a good filtration.

\subsection{Acknowledgements} The authors would like to thank Eric Friedlander for initiating the interesting study of mock injectivity which inspired many of our results, and for providing helpful feedback on this paper.  We also thank Chris Bendel for carefully reading a previous draft of this article, and for the many good suggestions he offered.

\section{Conditions on $G$ for the Existence of Proper Mock Injective Modules}\label{S:existence}

\subsection{} Let $G$ be an arbitrary affine group scheme and $k$ be an algebraically closed field. The coordinate algebra $k[G]$, which can be realized as $\text{ind}_1^G k$, is 
an important example of an induced module for $G$.  More generally, we find that induction from finite subgroup schemes of $G$ can produce $G$-modules with interesting properties. Such 
subgroup schemes are always exact in $G$ \cite[I.5.13(b)]{J}.

\begin{prop}\label{prop:chevalley-induction} Let $H$ be a finite subgroup scheme of $G$.
\begin{itemize}
\item[(a)] $\textup{ind}_H^G \, k$ is an injective $G$-module if and only if $k$ is an injective $H$-module.
\item[(b)] If the Frobenius morphism $F:G \rightarrow G$ restricts to an automorphism of $H$, then $\textup{ind}_H^G \, k$ is injective over $G_r$ for every $r>0$.
\end{itemize}
\end{prop}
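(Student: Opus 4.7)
For part (a), the ``if'' direction should follow by formal adjunction: since $H$ is a finite subgroup scheme of $G$, it is exact in $G$ by \cite[I.5.13(b)]{J}, hence $\textup{ind}_H^G$ is an exact functor; being right adjoint to the exact functor $\operatorname{res}^G_H$, it therefore carries injective $H$-modules to injective $G$-modules. For the converse, my plan is first to observe that for finite $H$ the morphism $G \to G/H$ is finite, so $G/H$ is affine, and hence (see, e.g., \cite[\S I.3]{J}) restriction from $G$ to $H$ preserves injectivity; thus $\operatorname{res}^G_H(\textup{ind}_H^G k) = \operatorname{res}^G_H k[G/H]$ is $H$-injective. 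Since the identity coset $eH \in G/H$ is $H$-fixed under left translation, evaluation at $eH$ provides an $H$-equivariant retraction $k[G/H] \twoheadrightarrow k$ of the (already $G$-equivariant) inclusion of constants $k \hookrightarrow k[G/H]$. This exhibits $k$ as a direct $H$-summand of an $H$-injective module, yielding that $k$ is $H$-injective.

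For part (b), my plan is to leverage the hypothesis on $F|_H$ to recognize $G/H$ as a principal $G_r$-torsor, then apply a descent argument. Since $F|_H$ is an automorphism, so is $F^r|_H$ for every $r \geq 1$, and hence
\[
H \cap G_r = \ker(F^r|_H) = 1.
\]
Taking $r=1$ forces $\operatorname{Lie}(H) = 0$, so $H$ is \'etale; each conjugate $gHg^{-1}$ remains \'etale while $G_r$ is purely infinitesimal, so $G_r \cap gHg^{-1} = 1$ for every $g \in G$ (the intersection of an \'etale subgroup with an infinitesimal one is trivial). Consequently the left $G_r$-action on $G/H$ has trivial scheme-theoretic stabilizers throughout, exhibiting the projection $G/H \to G/(G_r H)$ as a principal $G_r$-torsor. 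I would then invoke the standard fact that the coordinate algebra of such a torsor is $G_r$-injective: fppf-locally on $G/(G_r H)$ the torsor trivializes, so $k[G/H]$ locally takes the form $k[G/(G_r H)] \otimes k[G_r]$ as $G_r$-module --- a sum of copies of the injective regular representation $k[G_r]$ --- and faithful flatness propagates $G_r$-injectivity globally.

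The main obstacle I anticipate is this final descent step in part (b), namely rigorously establishing that the coordinate algebra of a principal $G_r$-torsor is $G_r$-injective. The other ingredients --- Frobenius reciprocity, exactness of induction from finite subgroup schemes, direct summands of injectives remaining injective, and the kernel identity $H \cap G_r = \ker(F^r|_H)$ --- should all be routine.
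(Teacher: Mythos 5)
Your part (a) is correct but takes a different route in the ``only if'' direction. The paper applies generalized Frobenius reciprocity to get $\operatorname{Ext}^i_G(k,\textup{ind}_H^G k) \cong \operatorname{Ext}^i_H(k,k)$ for all $i$, so $G$-injectivity kills the higher cohomology and the cohomological variety of $H$ is trivial, whence $k$ is $H$-injective. You instead note that restriction along the exact subgroup $H$ preserves injectives, and that the evaluation-at-identity map $\operatorname{ev}\colon \textup{ind}_H^G k \to k$ is $H$-equivariant and splits the inclusion of constants, so $k$ is an $H$-direct summand of an $H$-injective module. This is a valid, arguably more elementary argument than the support-variety one; just make sure you have the precise reference for ``$G/H$ affine $\Rightarrow$ restriction preserves injectives,'' which lives in Jantzen's discussion of exact subgroups rather than in \S I.3.

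Your part (b), however, contains a genuine gap, and it is exactly the one you flag. The statement that the coordinate algebra of a principal $G_r$-torsor is $G_r$-injective does not follow from ``faithful flatness propagates $G_r$-injectivity globally'': a faithfully flat base change $k[Y] \to k[Y']$ does not exhibit $k[X]$ as a direct summand of $k[X] \otimes_{k[Y]} k[Y']$, and injectivity is not an fppf-local property in any obvious sense. One can repair this by reformulating $G_r$-injectivity as flatness over the finite-dimensional self-injective algebra $\operatorname{Dist}(G_r)$ (so that it really does descend along faithfully flat maps in the $k[Y]$-direction), but that argument has to be supplied, not invoked. There is also a secondary point worth checking, namely that your étale-meets-infinitesimal stabilizer computation gives scheme-theoretic freeness of the $G_r$-action on $G/H$, not just triviality of stabilizers of $k$-points. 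By contrast, the paper's proof of (b) is short and avoids all of this: take any injective $G$-module $I$. Since $F^r\vert_H$ is an automorphism, $I^{(r)}$ remains $H$-injective, so $\textup{ind}_H^G I^{(r)}$ is $G$-injective and hence $G_r$-injective. By the tensor identity $\textup{ind}_H^G I^{(r)} \cong I^{(r)} \otimes \textup{ind}_H^G k$, and since $I^{(r)}$ restricts to a trivial $G_r$-module, $\textup{ind}_H^G k$ is a $G_r$-direct summand of the right-hand side, hence $G_r$-injective.
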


\begin{proof}
Since $H$ is exact in $G$, we may apply generalized Frobenius reciprocity \cite[Corollary I.4.6]{J} to obtain isomorphisms
\[
\text{Ext}_G^i(k,\textup{ind}_H^G \, k) \cong \text{Ext}_H^i(k,k)
\]
for every $i \ge 0$.  If $\textup{ind}_H^G \, k$ is injective over $G$, then the groups on the left side of the isomorphism are $0$ for every $i > 0$.  This implies that the cohomological variety of the finite group scheme $H$ is trivial, hence that $k$ is an injective $H$-module.  Conversely, if $k$ is injective over $H$, then $\textup{ind}_H^G \, k$ is injective over $G$, since induction always takes injective modules to injective modules \cite[Proposition I.3.9]{J}.  This proves the first statement.

Let $r>0$, and let $I$ be an injective $G$-module.  Then $I$ is injective over $H$.  Furthermore, if $F$ restricts to an automorphism of $H$, then $I^{(r)}$ is also injective over $H$.  By the tensor identity, we have
\[
I^{(r)} \otimes \textup{ind}_H^G \, k \cong \textup{ind}_H^G \, I^{(r)}.
\]
The module $\textup{ind}_H^G \, I^{(r)}$ is, by earlier remarks, injective over $G$, hence is injective over $G_r$.  On the other hand, over $G_r$,  $\textup{ind}_H^G \, k$ is a summand of $\textup{ind}_H^G \, I^{(r)}$, as is evident from the left hand side of the isomorphism above.  This shows that $\textup{ind}_H^G \, k$ is injective over $G_r$.
\end{proof}

If $H$ has the property that every simple $H$-module comes from a $G$-module, then we can also completely determine which $G$-modules will tensor with $\text{ind}_H^G \, k$ to produce injective $G$-modules.  This condition is satisfied whenever $H$ is unipotent, and also when $H$ is a Frobenius kernel or finite Chevalley subgroup of a semi-simple and simply connected group.

\begin{prop}
Let $H$ be a finite subgroup scheme of $G$ for which every simple $H$-module comes from a $G$-module.  Then for any $G$-module $M$, the module $M \otimes \textup{ind}_H^G \, k$ is injective over $G$ if and only if $M$ is injective over $H$.
\end{prop}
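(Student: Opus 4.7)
The plan is to combine the tensor identity with generalized Frobenius reciprocity and then exploit the hypothesis on simple $H$-modules to detect injectivity over $H$.

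First, I would rewrite $M \otimes \textup{ind}_H^G\, k$ as an induced module. By the tensor identity \cite[Proposition I.3.6]{J}, applied with the trivial $H$-module $k$ and the $G$-module $M$, one has a natural isomorphism
\[
M \otimes \textup{ind}_H^G\, k \;\cong\; \textup{ind}_H^G\bigl(M|_H\bigr).
\]
The ``if'' direction is now immediate: if $M|_H$ is injective over $H$, then since induction from a finite (hence exact) subgroup scheme sends injectives to injectives \cite[Proposition I.3.9]{J}, $\textup{ind}_H^G(M|_H)$ is an injective $G$-module.

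For the converse, assume $M \otimes \textup{ind}_H^G\, k$ is injective as a $G$-module. Given any $G$-module $L$, generalized Frobenius reciprocity \cite[Corollary I.4.6]{J} together with the above identification gives
\[
\Ext_G^{i}\bigl(L,\, M \otimes \textup{ind}_H^G\, k\bigr) \;\cong\; \Ext_G^{i}\bigl(L,\, \textup{ind}_H^G(M|_H)\bigr) \;\cong\; \Ext_H^{i}\bigl(L|_H,\, M|_H\bigr)
\]
for all $i \ge 0$. By the assumed injectivity, the left-hand side vanishes for every $i > 0$. Invoking the hypothesis that every simple $H$-module is the restriction of some $G$-module, we conclude that $\Ext_H^{i}(S, M|_H) = 0$ for every simple $H$-module $S$ and every $i > 0$.

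Finally, I would upgrade this vanishing to injectivity over $H$. Because $H$ is a finite group scheme, its coordinate algebra $k[H]$ is a finite-dimensional self-injective (Frobenius) Hopf algebra, and every finite-dimensional $H$-module has a finite composition series. A standard dimension-shifting argument along such composition series shows $\Ext_H^{1}(N, M|_H) = 0$ for every finite-dimensional $H$-module $N$; by Baer's criterion (applied to the finite-dimensional algebra $k[H]$), this is enough to conclude that $M|_H$ is injective as an $H$-module, completing the proof. The only real subtlety is this last step, since $M$ need not be finite-dimensional, but the finiteness of $H$ reduces the check to simple modules as indicated.
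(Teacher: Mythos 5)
Your proposal is correct and follows the same approach as the paper: both use the tensor identity to rewrite $M \otimes \textup{ind}_H^G\, k$ as $\textup{ind}_H^G\, M$, use \cite[Proposition I.3.9]{J} for the forward direction, and use generalized Frobenius reciprocity plus the hypothesis on simple $H$-modules for the converse. The only difference is that you spell out the final step (why vanishing of $\Ext_H^i(S,M)$ on simples forces injectivity over the finite group scheme $H$), which the paper treats as immediate.
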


\begin{proof}
By the tensor identity,
$$M \otimes \text{ind}_H^G \, k \cong \text{ind}_H^G \, M.$$
If $M$ is injective over $H$, then $\textup{ind}_H^G \, M$ is injective over $G$.  Conversely, if $\textup{ind}_H^G \, M$ is injective over $G$, then for every $G$-module $N$, we find that
$$0 = \text{Ext}_G^i(N,\textup{ind}_H^G \, M) \cong \text{Ext}_H^i(N,M).$$
Since every simple $H$-module comes from some $G$-module, this immediately implies that $M$ is injective over the finite group scheme $H$.
\end{proof}

\subsection{} By using these induced modules from the previous subsection, we are now in position to give a complete picture of which affine algebraic groups over $k$ have proper mock injective modules.  Recall that $G^0$ denotes the connected component of $G$.

\begin{theorem}\label{thm:main}
Let $G$ be an algebraic group over $k$ which is defined and split over a finite subfield $\mathbb{F}_p$ of $k$.  Then 
$G$ has proper mock injective modules if and only if either $G^0$ is not a torus or $G/G^0$ has order divisible by $p$.
\end{theorem}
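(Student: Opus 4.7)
My plan is to invoke \cref{prop:chevalley-induction} in both directions. For sufficiency, I will exhibit a finite subgroup scheme $H$ of $G$ such that $F|_H$ is an automorphism but $k$ is not injective over $H$; then $\textup{ind}_H^G \, k$ is the desired proper mock injective. For necessity, I will appeal to Nagata's classification of linearly reductive affine algebraic groups in positive characteristic.

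For sufficiency, the natural candidate is $H = G(\mathbb{F}_q)$, viewed as a constant finite closed subgroup scheme of $G$. Since $F$ commutes with $F^r$, this $H$ is stable under $F$, and being finite, $F|_H$ is an automorphism of $H$. By \cref{prop:chevalley-induction}, $\textup{ind}_H^G \, k$ is mock injective and fails to be $G$-injective precisely when $k$ is not $H$-injective, equivalently when $p$ divides $|G(\mathbb{F}_q)|$. I will verify this divisibility in the two hypothesized cases, after first noting that we may replace $\mathbb{F}_q$ by any larger finite extension without disturbing the hypotheses. If $G^0$ is not a torus, then $G^0$ contains a closed subgroup isomorphic to $\mathbb{G}_a$ (a root subgroup of a reductive quotient, or a subgroup of the unipotent radical); after enlarging $\mathbb{F}_q$ this $\mathbb{G}_a$ can be taken to be $\mathbb{F}_q$-defined, so $q = |\mathbb{G}_a(\mathbb{F}_q)|$ divides $|G(\mathbb{F}_q)|$. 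If instead $|G/G^0|$ is divisible by $p$, then the Lang--Steinberg theorem applied to the connected group $G^0$ yields a short exact sequence $1 \to G^0(\mathbb{F}_q) \to G(\mathbb{F}_q) \to (G/G^0)(\mathbb{F}_q) \to 1$; since $G$ is split over $\mathbb{F}_q$, the component group $G/G^0$ is constant, and so $(G/G^0)(\mathbb{F}_q)$ equals $G/G^0$ and has order divisible by $p$. In either case $p \mid |G(\mathbb{F}_q)|$, so $\textup{ind}_H^G \, k$ is a proper mock injective module.

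For necessity, suppose $G^0$ is a torus and $p$ does not divide $|G/G^0|$. Then $G$ is linearly reductive by Nagata's theorem, so the category of rational $G$-modules is semisimple. Consequently $\textup{Ext}_G^i$ vanishes in every positive degree, every rational $G$-module is $G$-injective, and in particular every mock injective $G$-module is actually injective over $G$, so no proper mock injectives exist.

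The main subtleties to verify carefully will be that enlarging the splitting field $\mathbb{F}_q$ preserves all the hypotheses (so that in the first case of sufficiency the promised unipotent subgroup can be taken $\mathbb{F}_q$-defined), and the invocation of Lang--Steinberg, which uses the connectedness of $G^0$ to guarantee surjectivity of $G(\mathbb{F}_q) \to (G/G^0)(\mathbb{F}_q)$. The necessity direction is conceptually clean once Nagata's theorem is cited.
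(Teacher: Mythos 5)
Your proof is correct and follows essentially the same route as the paper: the proper mock injective is $\operatorname{ind}_{G(\mathbb{F}_q)}^G k$ via \cref{prop:chevalley-induction}, and the converse is Nagata's linear-reductivity theorem. The only difference is presentational --- the paper argues in the contrapositive (showing $p\nmid|G(\mathbb{F}_q)|$ forces $G^0$ to be a torus via the $p$-power order of unipotent elements, and calls the $G/G^0$ part ``clear''), whereas you argue the divisibility $p\mid|G(\mathbb{F}_q)|$ directly in each case, making the Lang--Steinberg step and the $\mathbb{G}_a$-subgroup explicit.
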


\begin{proof} If $p$ does not divide the order of $G(\mathbb{F}_q)$, then it must be the case that $G^0$ is a torus and that $G/G^0$ is a finite group of order not divisible by $p$.  To see this, we note that the second statement is clear, while the first follows from the fact that every element in $G(\mathbb{F}_q)$ would have to be semi-simple, since unipotent elements have order which is a power of $p$.  In this case, $G$ is linearly reductive \cite{Na}, hence there are no non-injective $G$-modules of any kind, and therefore there cannot be any proper mock injective $G$-modules.

If $p$ divides the order of $G(\mathbb{F}_q)$, then $k$ is a non-injective $G(\mathbb{F}_q)$-module.  By Proposition \ref{prop:chevalley-induction}, the $G$-module $\textup{ind}_{G(\mathbb{F}_q)}^G \, k$ is both non-injective over $G$, while also being injective over $G_r$ for all $r$.  Therefore, it is a proper mock injective $G$-module.
\end{proof}

\begin{remark} If $G$ is connected reductive and not a torus then the mock injective modules described above do not have a good filtration. One 
can see this because 
$$\textup{Ext}^{i}_{G}(k,\textup{ind}_{G(\mathbb{F}_q)}^G \, k)\cong \textup{Ext}^{i}_{G({\mathbb F}_{q})}(k,k)\neq 0$$ 
for some $i>0$. 
Here, the trivial module should be viewed as the Weyl module of highest weight $0$. 
\end{remark}

\subsection{} We will now show that it is even easier to exhibit non-injective $G$-modules which are mock injective with respect to the fixed-point subgroups of powers of the Frobenius morphism.

\begin{prop}
Let $G$ be an affine algebraic group defined over $\mathbb{F}_p$, $I$ an injective $G$-module, and $r > 0$.  Then $I^{(r)}$ is injective over $G(\mathbb{F}_q)$, for $q=p^s$ for all $s \ge 0$, but is not injective over $G$.
\end{prop}

\begin{proof}
Since  $G(\mathbb{F}_q)$ is exact in $G$, $I$ is injective on restriction to $G(\mathbb{F}_q)$.  The Frobenius map defines an automorphism of $G(\mathbb{F}_q)$, hence $I^{(r)}$ is also injective over it.  Finally, this module is trivial for $G_r$, thus cannot be injective over $G$.
\end{proof}

\subsection{} A natural question at this point is whether or not a non-injective $G$-module can be injective over all Frobenius kernels and finite Chevalley subgroups.  We at least observe that taking a tensor product of modules from our two classes of examples above cannot produce such an example.  That is, if $H$ is a finite subgroup scheme such that the Frobenius morphism restricts to an automorphism of $H$, then $I^{(r)}$ is injective over $H$, and so we have 
$$I^{(r)} \otimes \textup{ind}_H^G k \cong \textup{ind}_H^G \, I^{(r)},$$
which is then injective over $G$. In fact, one can go a step further and ask the following question.

\begin{quest}
Let $G$ be a reductive group. If $M$ is a $G$-module which is injective over every finite subgroup scheme of $G$, is $M$ then injective over $G$?
\end{quest}

For non-reductive groups, Friedlander's argument can be immediately used to show that the answer to this question is ``no.''  That is, embed such a group $G$ into some $GL_n$.  The coordinate algebra of $GL_n$ is then non-injective for $G$, but is injective over every finite subgroup scheme since they are all exact in $GL_n$. 

\subsection{Parshall Conjecture and infinite-dimensional modules} In this subsection we assume that $G$ is a reductive algebraic group scheme over a field $k$ of any positive characteristic. 
In 1986, Parshall \cite{P} conjectured that a finite-dimensional rational $G$-module which is projective over $G_1$ is then projective over 
$G(\mathbb{F}_p)$. This conjecture was proven by Lin and Nakano (cf. \cite{LN}).  Drupieski \cite{D} later generalized this result by proving that, 
for any $r \ge 1$, a finite-dimensional rational $G$-module that is projective over $G_r$ is projective over $G(\mathbb{F}_{q})$. 

Now one could also ask what happens to the statement of Parshall's Conjecture when the finite-dimensionality assumption is removed. 
As in the case with mock injectives, infinite-dimensional rational $G$-modules can exhibit new and interesting structures. 
We will show the natural analogues to the above conjectures are false by using the infinite-dimensional rational $G$-module $M$ 
which is projective over $G_r$ (as defined in Section~\ref{S:existence}) and by demonstrating that this module is not projective over $G(\mathbb{F}_{q})$. 

\begin{prop}
Let $r \ge 1$, and $q=p^s$ where $s \ge 1$.  Then the $G$-module $\textup{ind}_{G(\mathbb{F}_p)}^G \, k$ is projective over $G_r$, but is not projective over $G(\mathbb{F}_{q})$.
\end{prop}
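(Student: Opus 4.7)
The plan is to handle the two assertions separately. For projectivity of $\textup{ind}_{G(\mathbb{F}_p)}^G k$ over $G_r$, the Frobenius morphism acts as the identity on $G(\mathbb{F}_p)$ and is therefore trivially an automorphism of this finite subgroup, so Proposition~\ref{prop:chevalley-induction}(b) immediately yields that $\textup{ind}_{G(\mathbb{F}_p)}^G k$ is injective on restriction to $G_r$. To pass from injectivity to projectivity, I would invoke the standard fact that the distribution algebra of the finite group scheme $G_r$ is a finite-dimensional Hopf algebra, hence (by Larson--Sweedler) a Frobenius algebra, so that injective and projective $G_r$-modules coincide.

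For the non-projectivity assertion, set $M = \textup{ind}_{G(\mathbb{F}_p)}^G k$. My plan is to descend the question to $G(\mathbb{F}_p)$: because $G(\mathbb{F}_p) \subseteq G(\mathbb{F}_q)$ and projectivity over a finite group descends to any subgroup, it suffices to show that $\textup{Res}_{G(\mathbb{F}_p)} M$ is not projective. For this I would exhibit the trivial module as a direct summand of $\textup{Res}_{G(\mathbb{F}_p)} M$. Realising $M$ as the right $G(\mathbb{F}_p)$-invariant functions inside $k[G]$, the constant functions give an inclusion $k \hookrightarrow M$ (the unit of the induction adjunction), and I would split it by the evaluation map $\textup{ev}_e \colon M \to k$, $f \mapsto f(e)$. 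The key calculation, that $\textup{ev}_e$ is $G(\mathbb{F}_p)$-equivariant, comes from
\[
(h \cdot f)(e) = f(h^{-1}) = f(h^{-1} h) = f(e)
\]
for $h \in G(\mathbb{F}_p)$ and $f \in M$, where the middle equality uses the right $G(\mathbb{F}_p)$-invariance of $f$.

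With $k$ identified as a $G(\mathbb{F}_p)$-summand of $\textup{Res}_{G(\mathbb{F}_p)} M$, the conclusion follows: when $G$ is not a torus (the only case in which the statement is nonvacuous), the finite group $G(\mathbb{F}_p)$ has order divisible by $p$, so $k$ is not projective over $kG(\mathbb{F}_p)$; therefore $M$ is not projective over $G(\mathbb{F}_p)$, and hence not over $G(\mathbb{F}_q)$. The principal obstacle here is really just the explicit equivariance verification for the evaluation map; once that is in place the remaining steps are routine finite-group representation theory. An alternative cohomological packaging of the same idea is to use Eckmann--Shapiro for $G(\mathbb{F}_p) \subseteq G(\mathbb{F}_q)$ to rewrite
\[
\textup{Ext}^i_{G(\mathbb{F}_q)}\bigl(\textup{ind}_{G(\mathbb{F}_p)}^{G(\mathbb{F}_q)} k,\, M\bigr) \cong H^i\bigl(G(\mathbb{F}_p),\, M\bigr),
\]
which contains the nonzero $H^i(G(\mathbb{F}_p), k)$ as a summand by the same splitting, and this obstruction directly contradicts injectivity (equivalently projectivity) of $M$ over $G(\mathbb{F}_q)$.
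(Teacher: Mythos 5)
Your proof is correct and takes essentially the same route as the paper: both reduce from $G(\mathbb{F}_q)$ to $G(\mathbb{F}_p)$ by restriction, realize $M$ as right $G(\mathbb{F}_p)$-invariant functions on $G$, and split off a trivial summand via the evaluation-at-identity map paired with the inclusion of constant functions. You supply a few details the paper leaves implicit (the explicit equivariance check for $\mathrm{ev}$, the injective-equals-projective fact for $G_r$, and the observation that the argument requires $G$ not to be a torus so that $p$ divides $|G(\mathbb{F}_p)|$), but the core argument is identical.
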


\begin{proof}  Let $M = \textup{ind}_{G(\mathbb{F}_p)}^G \, k$. Proposition~\ref{prop:chevalley-induction} estabilishes the mock injectivity of $M$, so it remains only to show that $M$ is not 
projective over $G(\mathbb{F}_{q})$.  Since $kG(\mathbb{F}_{q})$ is a free $kG(\mathbb{F}_{p})$-module, every injective $G(\mathbb{F}_q)$-module is injective over $G(\mathbb{F}_p)$, thus it suffices to show that $M$ is not projective over $G(\mathbb{F}_{p})$.

The subgroup $G(\mathbb{F}_p)$ acts on $G$ via the right regular action.  Then $M \cong k[G]^{G(\mathbb{F}_p)}$, where
\[
k[G]^{G(\mathbb{F}_p)} = \{f \in k[G] \mid f(gh) = f(g) \text{ for all $g \in G(k), h \in G(\mathbb{F}_p)$} \}.
\] 
This allows us to define a $G(\mathbb{F}_p)$-module homomorphism $\text{ev}:M \rightarrow k$ given by $\text{ev}(f) = f(1)$ for any $f \in M$. 
Moreover, we also have a $G$-module homomorphism $\psi: k \rightarrow M$ given by $\psi(a) = a$ (the constant function) for any $a \in k$. 
The composition $\text{ev}\circ \psi = 1_{k}$, and hence there exists a $G(\mathbb{F}_p)$ decomposition $M = k \oplus N$ for some 
$G(\mathbb{F}_p)$-module $N$.  It follows immediately that $M$ is not projective over $G(\mathbb{F}_p)$.

\end{proof}

\section{Mock Injective Modules With Finite Socles}

\subsection{} Throughout this section, let $G$ be a connected reductive algebraic scheme group over $k$. 
We investigate the relationship between mock injective modules and injective modules when we impose additional finiteness conditions on our modules.  Specifically, we are concerned in this section with modules that have finite socles.  This inquiry was motivated by Friedlander's observation that for a unipotent group $U$, there are proper mock injective $U$-modules that embed in $k[U]$, 
hence have a trivial socle.  Thus we ask whether there exist proper mock injective modules with a simple socle, or at least a finite socle, over other types of algebraic groups.  

We find that there do exist proper mock injective modules for reductive groups which have a simple socle by producing an example of one for $SL_2$.  However, for the Borel subgroups of a reductive group, there do not even exist proper mock injective modules with a finite socle.  This is established by proving a more general result for mock injective modules of a parabolic subgroup $P_J$ of $G$ which are injective over the Levi factors of $P_J$.

\subsection{Unipotent Groups}
Let $U$ denote a connected unipotent group over $k$ which is defined over $\mathbb{F}_p$.
We give an alternate proof here to Friedlander's result showing that unipotent groups can have lots of proper mock injective modules with finite socle.
\begin{prop}\label{prop:unipotent}
Let $r\geq 1$, and $q=p^r$.  Then the proper mock injective $U$-module $M_r = \textup{ind}_{U(\mathbb{F}_{q})}^U\, k$ satisfies $\textup{soc}_U(M_r) = k$. 
\end{prop}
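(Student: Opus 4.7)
The plan is to compute the socle directly via Frobenius reciprocity. Since $U$ is a connected unipotent algebraic group, the only simple $U$-module (up to isomorphism) is the trivial module $k$. Consequently the $U$-socle of any rational $U$-module $V$ coincides with the space of $U$-invariants $V^U$, and a simple socle means precisely that $\dim_k V^U = 1$.

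With this reduction in hand, I would first invoke \cref{prop:chevalley-induction}(b), applied to the finite subgroup scheme $H = U(\mathbb{F}_q)$ (which is stable under the Frobenius morphism), to confirm that $M_r$ is mock injective. To verify that $M_r$ is not injective over $U$, I would note that by part (a) of the same proposition, $M_r$ is injective over $U$ if and only if $k$ is injective over $U(\mathbb{F}_q)$; but $U(\mathbb{F}_q)$ has order a power of $p$ (being a finite unipotent group over $\mathbb{F}_q$), so its trivial representation is certainly not injective. This shows $M_r$ is a proper mock injective module.

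Next, to pin down the socle, I would apply generalized Frobenius reciprocity \cite[Corollary I.4.6]{J}, which is available because the finite subgroup scheme $U(\mathbb{F}_q)$ is exact in $U$. This gives
\[
\Hom_U(k, M_r) = \Hom_U\bigl(k, \textup{ind}_{U(\mathbb{F}_q)}^U k\bigr) \cong \Hom_{U(\mathbb{F}_q)}(k,k) \cong k.
\]
Combining this with the identification $\mathrm{soc}_U(M_r) = M_r^U = \Hom_U(k,M_r)$ from the first paragraph yields $\mathrm{soc}_U(M_r) \cong k$, as desired.

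There is essentially no obstacle here once one recognizes that for a connected unipotent group the socle reduces to the fixed-point space; the whole statement then follows from a single application of Frobenius reciprocity. The only point that deserves a brief mention in the write-up is verifying the hypothesis that $U(\mathbb{F}_q)$ is a finite (hence exact) subgroup scheme of $U$ so that the reciprocity isomorphism is valid.
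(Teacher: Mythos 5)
Your proof is correct and follows essentially the same route as the paper: observe that $k$ is the unique simple $U$-module (so the socle is the isotypic component of $k$), then compute $\Hom_U(k,M_r)\cong\Hom_{U(\mathbb{F}_q)}(k,k)\cong k$ by Frobenius reciprocity. Your additional verification that $M_r$ is indeed a proper mock injective is a reasonable inclusion, though the paper treats that as already established by the constructions of Section~\ref{S:existence} and only proves the socle claim.
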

\begin{proof} The only simple $U$-module is the trivial module $k$. Now by Frobenius reciprocity, $\text{Hom}_{U}(k,M_{r})\cong \text{Hom}_{U({\mathbb F}_{q})}(k,k).$ 
It follows that $\text{soc}_{U} M_{r}\cong k$. 
\end{proof}

\subsection{Actions of the center} We begin by recalling the following fact about actions involving the center of a connected reductive algebraic group.  Note that the center in this case consists of semi-simple elements in $G$ 
(cf. \cite[Corollary 7.6.4(iii)]{S}).

\begin{lemma}
Let $Z=Z(G)$ be the center of a reductive algebraic group $G$, $X(Z)$ the character group of $Z$, and $M$ be any rational $G$-module.  There is a $G$-module decomposition
\[
M = \bigoplus_{\chi \in X(Z)} M_{\chi}
\]
where $M_{\chi} = \{ m \in M \mid z\cdot m = \chi(z)m\,\,\text{for all $z \in Z$}\}$. 

Moreover, if $\chi, \chi' \in X(Z)$ satisfies $\chi \neq \chi'$ and $M$, $N$ are $G$-modules satisfying $M=M_{\chi}$ and $N = N_{\chi'}$, then
$\textup{Hom}_G(M,N) = 0.$
\end{lemma}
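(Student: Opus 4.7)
The plan is to reduce both assertions to standard representation theory of diagonalizable group schemes. Since every element of $Z = Z(G)$ is semisimple by the cited corollary, and since the centre of a connected reductive group is contained in every maximal torus, $Z$ is a closed subgroup scheme of a torus and hence is diagonalizable. Consequently, rational $Z$-modules decompose canonically as direct sums of weight spaces indexed by the character group $X(Z)$ (cf.\ \cite[I.2.11]{J}).

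For the first assertion, restricting $M$ to $Z$ yields the canonical decomposition $M = \bigoplus_{\psi \in X(Z)} M_\psi$. Setting $N := \bigoplus_{\psi \neq \chi} M_\psi$ gives the required direct sum $M = M_\chi \oplus N$ at the level of $Z$-modules. To upgrade this to a decomposition of rational $G$-modules, I would use that $Z$ is central, which makes the $G$- and $Z$-actions commute: for any commutative $k$-algebra $R$, any $g \in G(R)$, $z \in Z(R)$, and $m \in M_\psi \otimes R$,
\[
z \cdot (g \cdot m) = g \cdot (z \cdot m) = \psi(z)(g \cdot m),
\]
so each $M_\psi$ is $G$-stable, and the decomposition is one of rational $G$-modules.

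For the second assertion, suppose $\varphi \in \Hom_G(M, N)$. Then $\varphi$ restricts to a $Z$-module map, so for any $m \in M$ and $z \in Z$,
\[
\chi(z)\, \varphi(m) \;=\; \varphi(z \cdot m) \;=\; z \cdot \varphi(m) \;=\; \chi'(z)\, \varphi(m),
\]
and hence $(\chi - \chi')(z)\, \varphi(m) = 0$. Since $\chi \neq \chi'$ in $X(Z)$, the difference $\chi - \chi'$ is nonzero as an element of $k[Z]$, so does not vanish identically, forcing $\varphi(m) = 0$ for every $m$. (Equivalently, at the comodule level, $(\varphi \otimes 1)\Delta_M = \Delta_N \varphi$ gives $\varphi(m) \otimes (\chi - \chi') = 0$ in $N \otimes k[Z]$, and then linear independence of distinct grouplike elements in $k[Z]$ yields $\varphi(m) = 0$.)

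The main conceptual point is the identification of $Z$ as a diagonalizable group scheme; once that is in hand, both parts are essentially formal consequences of the weight-space decomposition and the centrality of $Z$, so I do not anticipate any substantial obstacle.
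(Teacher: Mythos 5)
The paper does not actually give a proof of this lemma: it is introduced with the sentence ``We begin by recalling the following fact about actions involving the center\dots'' and is stated without argument, citing only \cite[Corollary 7.6.4(iii)]{S} for the semisimplicity of $Z$. So there is no in-paper proof to compare against. Your proposal is a correct proof. The key observations you make — that $Z$ is a closed subgroup scheme of a maximal torus (hence diagonalizable even when not reduced, e.g.\ $\mu_p \subseteq SL_p$), that rational modules for diagonalizable group schemes decompose canonically into weight spaces, that centrality makes each $M_\psi$ a $G$-submodule, and that a $G$-map must respect the $X(Z)$-grading — are exactly what is implicitly being used when the paper applies this lemma (e.g.\ the central-character decomposition of $k[U_J]$ in Lemma 3.3.2 and the finiteness argument in Proposition 3.4.3). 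Your parenthetical comodule-level reformulation of the second part is the right way to handle the case that $Z$ is non-reduced and has too few $k$-points for the pointwise argument to close on its own; you were right to include it, since the naive pointwise version would be incomplete for, say, $G = SL_p$ in characteristic $p$.

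Two very minor stylistic remarks. First, you could shorten the first part by noting that once the $G$-stability of each $M_\psi$ is established, $N = \bigoplus_{\psi \neq \chi} M_\psi$ is automatically a $G$-submodule and the decomposition $M = M_\chi \oplus N$ is immediate — no separate argument is needed. Second, the semisimplicity of $Z$ from \cite[Corollary 7.6.4(iii)]{S} is not strictly needed once you observe that $Z(G) \subseteq T$ as closed subgroup schemes; containment in $T$ already gives diagonalizability. Neither of these affects correctness.
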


Fix a maximal torus $T \le G$, and a Borel subgroup $B$ containing $T$.  We choose a set set of simple roots $\Delta$ so that the root subgroups contained in $B$ correspond to the negative roots.  The choice of $\Delta$ also determines the set of dominant weights $X(T)_+$.

For $J \subseteq \Delta$, let $P_J$ be the corresponding standard parabolic subgroup of $G$ containing $B$, with unipotent radical $U_J$ and Levi factor $L_J$. With $Z_J = Z(L_J)$ denoting the center of $L_J$, it can be verified that 
\[
X(Z_J) = X(T)/\mathbb{Z}J. 
\]
Thus, every central character is of the form $\chi = \lambda + \mathbb{Z}J$. If we let $\pi: X(T) \rightarrow X(Z_J)$ denote
the quotient map, then $\pi(\mathbb{Z}\Phi) = \mathbb{Z}\Phi/\mathbb{Z}J \cong \mathbb{Z}I$ where $I = \Delta \backslash J$. 
It follows that any $L_J$-module $M$ whose weights all lie in $\mathbb{Z}\Phi$, has a central character decomposition of the form 
\[
M = \bigoplus_{\chi \in \mathbb{Z}I} M_{\chi}.  
\]
This statement particularly  applies to $k[U_J]$.

\begin{lemma}\label{lemma:coordinate-decomposition}
The $L_J$-module $k[U_J]$ has a central character decomposition of the form
\[
k[U_J] = \bigoplus_{\chi \in \mathbb{N}I} k[U_J]_{\chi}
\]
where $\dim(k[U_J]_{\chi}) < \infty$ for all $\chi$. 
\end{lemma}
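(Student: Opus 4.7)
The plan is to exploit the explicit structure of $U_J$ as an affine space. First I would fix an ordering of the positive roots $\Phi^+ \setminus \Phi_J^+$ and use the standard $Z_J$-equivariant isomorphism of varieties $\prod_{\alpha \in \Phi^+ \setminus \Phi_J^+} U_\alpha \xrightarrow{\sim} U_J$, where each root subgroup $U_\alpha \cong \mathbb{G}_a$ carries its natural $T$-action via the character $\alpha$. Passing to coordinate algebras, this identifies $k[U_J]$ with a polynomial ring $k[x_\alpha \mid \alpha \in \Phi^+ \setminus \Phi_J^+]$ in which each generator $x_\alpha$ is a $Z_J$-weight vector whose weight equals $\pm \pi(\alpha)$ (the sign depending on the convention for the induced action on functions).

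The second step is to identify where these weights land under the quotient $\pi\colon X(T) \twoheadrightarrow X(Z_J)$. Writing $\alpha = \sum_{\gamma \in \Delta} n_\gamma^\alpha\, \gamma$ with each $n_\gamma^\alpha \in \mathbb{N}$, the hypothesis $\alpha \notin \mathbb{Z}J$ forces $n_\gamma^\alpha > 0$ for at least one $\gamma \in I$, so $\pi(\alpha) = \sum_{\gamma \in I} n_\gamma^\alpha\, \gamma$ is a nonzero element of $\mathbb{N}I$. Adopting the sign convention that places the weights of the $x_\alpha$ in $\mathbb{N}I$, every monomial $\prod_\alpha x_\alpha^{m_\alpha}$ is a $Z_J$-weight vector of weight $\sum_\alpha m_\alpha\, \pi(\alpha) \in \mathbb{N}I$, and since the monomials form a $k$-basis of weight vectors, $k[U_J] = \bigoplus_{\chi \in \mathbb{N}I} k[U_J]_\chi$ as claimed.

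For the finite-dimensionality of each $k[U_J]_\chi$, I would fix $\chi = \sum_{\gamma \in I} c_\gamma\, \gamma$ and analyze the equation $\sum_\alpha m_\alpha\, \pi(\alpha) = \chi$. Summing the resulting coefficient equations over $\gamma \in I$ gives
\[
\sum_\alpha m_\alpha \Bigl(\sum_{\gamma \in I} n_\gamma^\alpha\Bigr) \;=\; \sum_{\gamma \in I} c_\gamma,
\]
and since $\sum_{\gamma \in I} n_\gamma^\alpha \geq 1$ for every $\alpha$ under consideration, the total degree $\sum_\alpha m_\alpha$ is bounded by $\sum_{\gamma \in I} c_\gamma$. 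Only finitely many monomials in finitely many variables have bounded total degree, so $k[U_J]_\chi$ is finite-dimensional.

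I do not foresee any serious obstacle here; the argument is essentially a bookkeeping exercise with root system data. The one point of care is the sign convention, since the standard rule $(t\cdot f)(u) = f(t^{-1}ut)$ would a priori give $x_\alpha$ weight $-\alpha$; one should verify that the authors' indexing by $\mathbb{N}I$ (rather than $-\mathbb{N}I$) is consistent with whichever convention has been adopted in the preceding discussion.
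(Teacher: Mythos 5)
Your argument is correct. Both you and the paper begin from the $L_J$-equivariant description $k[U_J] \cong k[x_\alpha \mid \alpha \in \Phi^+ \setminus \Phi_J^+]$ and observe that the monomial basis consists of $T$-weight vectors, so the $\mathbb{N}I$-grading follows once one notes that $\pi(\alpha) \in \mathbb{N}I \setminus \{0\}$ for every $\alpha \in \Phi^+ \setminus \Phi_J^+$. The finiteness step, however, is handled differently. The paper decomposes $k[U_J]_\chi = \bigoplus_{\mu \in \mathbb{N}J} k[U_J]_{\lambda+\mu}$ and argues that only finitely many summands are nonzero, passing through the set identity $\Phi^+ \setminus \Phi_J^+ = \Phi_I^+ \cup \bigl((\Phi_I^+ + \Phi_J^+) \cap \Phi^+\bigr)$ and then a coefficient-chasing argument in the $r_\alpha$ and $t_{\alpha+\beta}$. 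Your observation --- that summing the $I$-coefficient equations and using $\sum_{\gamma \in I} n_\gamma^\alpha \geq 1$ yields the uniform bound $\sum_\alpha m_\alpha \leq \sum_{\gamma \in I} c_\gamma$ on total degree --- is more direct and more robust: it bypasses the intermediate root-set identity entirely, which is an advantage since that identity is not literally valid in all root systems (e.g.\ in type $B_2$ with $J$ the short simple root, $\alpha_1 + 2\alpha_2 \in \Phi^+ \setminus \Phi_J^+$ lies in neither piece of the displayed union). Your caution about the sign convention is also apt, and you resolve it correctly: the paper fixes the convention that $x_\gamma$ has $T$-weight $\gamma$, which places the grading in $\mathbb{N}I$ as you anticipated.
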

\begin{proof}
First, recall that $k[U_J]$ is a polynomial ring given by
\[
k[U_J] = k[x_{\gamma} | \gamma \in \Phi^+\backslash \Phi_J^+]
\]
where each $x_{\gamma}$ has weight $\gamma$. For any weight $\lambda \in X(T)$, the weight space 
$k[U_J]_{\lambda}$ is spanned by monomials of the form 
$\prod_{\gamma \in \Phi^+\backslash \Phi_J^+} x_{\gamma}^{n_{\gamma}}$ where 
$\lambda = \sum_{\gamma} n_{\gamma}\gamma$ with $n_{\gamma}\geq 0$ is any expression for $\lambda$. Since there
are only finitely many ways to express $\lambda$ as a non-negative integral combination of elements in $\Phi^+\backslash \Phi_J^+$,
we can conclude that the weight spaces are finite-dimensional. 

Fix a central character $\chi \in X(Z_J)$ which acts non-trivially on $k[U_J]$.
By the remark preceding this lemma and the above description of the weight spaces of $k[U_J]$, it follows that 
there exists a unique weight $\lambda \in \mathbb{N}I$ satisfying
\[
\chi = \lambda + \mathbb{Z}J.
\]
Hence, 
\[
k[U_J]_{\chi} = \bigoplus_{\mu \in \mathbb{Z}J}k[U_J]_{\lambda+ \mu}.
\]
So it will be sufficient to show that for any $\lambda \in \mathbb{Z}I$, there are only finitely many $\mu \in \mathbb{Z}J$ such that 
$k[U_J]_{\lambda + \mu} \neq 0$. In fact, it can be immediately deduced that any such $\mu$ must satisfy $\mu \in \mathbb{N}J$
(i.e. it must be a non-negative linear combination of elements in $\Phi_J^+$). 

Observe now that we can write 
\[
\Phi^+\backslash \Phi_J^+ = \Phi_I^+ \cup ((\Phi_I^+ + \Phi_J^+)\cap \Phi^+). 
\]
So $k[U_J]_{\lambda+\mu} \neq 0$ if and only if we can write 
\begin{align*}
\lambda + \mu &= \sum_{\alpha \in \Phi_I^+} r_{\alpha}\alpha + 
	\sum_{\alpha + \beta \in (\Phi_I^+ +\Phi_J^+)\cap \Phi^+} t_{\alpha+\beta}(\alpha+\beta) \\
	&= \sum_{\alpha \in \Phi_I^+} \left( r_{\alpha} + \sum_{\{\beta \in \Phi_J^+ \mid \alpha + \beta \in \Phi^+\}} t_{\alpha+\beta}\right) 
		\alpha + 
	    \sum_{\beta \in \Phi_J^+} \left(\sum_{\{\alpha \in \Phi_I^+ \mid \alpha+\beta \in \Phi^+\}} t_{\alpha+\beta}\right) \beta.
\end{align*}
where for all $\alpha \in \Phi_I^+$ and $\beta \in \Phi_J^+$,  
\begin{align*}
n_{\alpha} &= r_{\alpha} + \sum_{\{\beta \in \Phi_J^+ \mid \alpha + \beta \in \Phi^+\}} t_{\alpha+\beta} \geq 0\\
m_{\beta} &=   \sum_{\{\alpha \in \Phi_I^+ \mid \alpha+\beta \in \Phi^+\}}t_{\alpha + \beta} \geq 0
\end{align*}
and $\lambda = \sum n_{\alpha}\alpha$ and $\mu = \sum m_{\beta}\beta$. 
Since the $r_{\alpha}$ and $t_{\alpha + \beta}$ are also required to be non-negative integers, it follows that there are only finitely many $\mu \in \mathbb{N}J^+$ for which this is possible. Therefore,
$k[U_J]_{\chi}$ is finite-dimensional. 
\end{proof}

\subsection{} We begin this section with a standard fact about composition factor multiplicities.  For $\lambda \in X(T)_+$, $L(\lambda)$ is the simple $G$-module with highest weight $\lambda$, $H^0(\lambda) = \text{ind}_B^G \, \lambda$, and $I(\lambda)$ is the $G$-injective hull of $L(\lambda)$.

\begin{lemma}\label{lemma:finite-multiplicity}
Let $M$ be a finite-dimensional $G$-module, $M^*$ its linear dual, and $\lambda, \mu  \in X(T)_+$. Then 
\[
\dim \textup{Hom}_G(L(\mu), I(\lambda)\otimes M)  = [L(\mu)\otimes M^*:L(\lambda)] < \infty. 
\]
Consequently, every indecomposable summand of $I(\lambda)\otimes M$ occurs with finite multiplicity. 
\end{lemma}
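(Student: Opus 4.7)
The plan is to use the tensor-hom adjunction and the defining properties of the injective hull $I(\lambda)$.

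First, I would rewrite
\[
\textup{Hom}_G(L(\mu), I(\lambda)\otimes M) \cong \textup{Hom}_G(L(\mu)\otimes M^{*}, I(\lambda)),
\]
which is valid since $M$ is finite-dimensional. The module $V := L(\mu)\otimes M^{*}$ is finite-dimensional (as both factors are), so matters reduce to a general computation of $\textup{Hom}_G(V, I(\lambda))$ for a finite-dimensional $V$.

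The key lemma I would prove is that for any finite-dimensional rational $G$-module $V$,
\[
\dim \textup{Hom}_G(V, I(\lambda)) = [V : L(\lambda)].
\]
This follows by induction on the composition length of $V$: since $I(\lambda)$ is injective, $\textup{Hom}_G(-, I(\lambda))$ is exact, so a composition series $0 = V_0 \subset V_1 \subset \cdots \subset V_n = V$ with simple subquotients $V_i/V_{i-1}$ yields additivity of $\dim\textup{Hom}_G(-, I(\lambda))$ on composition factors. Each simple $L(\nu)$ contributes $\delta_{\nu,\lambda}$, because $\textup{Hom}_G(L(\nu), I(\lambda))$ embeds into $\textup{Hom}_G(L(\nu), \textup{soc}_G I(\lambda)) = \textup{Hom}_G(L(\nu), L(\lambda))$, which is one-dimensional exactly when $\nu = \lambda$. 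Applied to $V = L(\mu)\otimes M^{*}$, this gives the displayed formula and immediately the finiteness assertion.

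For the ``consequently'' clause, I would first observe that $I(\lambda)\otimes M$ is itself an injective $G$-module, since for any $G$-module $N$ and $i>0$, $\textup{Ext}^i_G(N, I(\lambda)\otimes M) \cong \textup{Ext}^i_G(N\otimes M^{*}, I(\lambda)) = 0$. By the standard decomposition of injective rational modules for reductive $G$ (cf.\ Jantzen, \S II), one has
\[
I(\lambda)\otimes M \cong \bigoplus_{\nu \in X(T)_{+}} I(\nu)^{\oplus n_{\nu}},
\]
and the multiplicity $n_{\nu}$ equals the $L(\nu)$-multiplicity in the socle, namely $\dim\textup{Hom}_G(L(\nu), I(\lambda)\otimes M)$, which is finite by the first part. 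I expect the only real subtlety to be the (standard) appeal to the existence of the injective hulls $I(\lambda)$ in the rational category and the decomposition of injectives as sums of $I(\nu)$'s; once these are invoked, the argument is essentially formal.
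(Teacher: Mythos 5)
Your proposal is correct and follows essentially the same route as the paper: apply tensor–Hom adjunction to move $M^{*}$ across, then use exactness of $\textup{Hom}_G(-, I(\lambda))$ to count occurrences of $L(\lambda)$ in a composition series, i.e.\ $[L(\mu)\otimes M^{*}:L(\lambda)]$. You have simply spelled out the induction on composition length and the handling of the ``consequently'' clause (via the decomposition of the injective $I(\lambda)\otimes M$) in slightly more detail than the paper does.
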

\begin{proof}
By adjointness,
\[
\textup{Hom}_G(L(\mu), I(\lambda)\otimes M) = \textup{Hom}_G(L(\mu)\otimes M^*, I(\lambda)).
\]
The injectivity of $I(\lambda)$ implies that the functor $\textup{Hom}_G(-, I(\lambda))$ is exact on short exact sequences. 
Thus, $\dim \textup{Hom}_G(L(\mu)\otimes M^*, I(\lambda))$ counts the number of times the composition 
factor $L(\lambda)$ appears in $L(\mu)\otimes M^*$ (i.e., $[L(\mu)\otimes M^*:L(\lambda)]$). 
\end{proof}

\begin{remark}
Although not necessary for our main result, one might wonder if the socle of $I(\lambda) \otimes M$ is always finite-dimensional for a 
finite-dimensional module $M$. This would be equivalent to showing that $[L(\mu)\otimes M^*:L(\lambda)]=0$ for all but finitely many 
$\mu \in X(T)_+$. It turns out this is false in general, as one can see by taking $G=SL_2(\overline{\mathbb{F}_2})$, and 
setting $\lambda = 0$ and $M = L(1)$. Then it can verified that there are infinitely many $\mu \in X(T)_+ = \mathbb{N}$ such that 
 $[L(\mu)\otimes L(1):L(0)] \neq 0$, and thus $\text{soc}_G(I(0)\otimes L(1))$ is infinite-dimensional. 
\end{remark}

For any weight $\lambda \in X(T)$ which is dominant for $L_J$, 
the injective hull for $L_{P_J}(\lambda)$ (i.e., the inflation of $L_{J}(\lambda)$ to $P_{J}$) is given by
\[
I_{P_J}(\lambda) = \text{ind}_{L_{J}}^{P_{J}}\ I_{L_{J}}(\lambda)\cong  I_{L_J}(\lambda)\otimes k[U_J]
\]
where $L_J$  acts by conjugation on $k[U_J]$. This can be verified by using the fact that induction take injectives to injectives and 
Frobenius reciprocity to show that the socle of $I_{P_J}(\lambda)$ is $L_{P_J}(\lambda)$.

We would like to show that all of the indecomposable 
summands of $I_{P_J}(\lambda)|_{L_J}$ occur with finite multiplicity. Unfortunately, the module $k[U_J]$ is not
a finite-dimensional $L_J$-module, so we cannot immediately apply Lemma~\ref{lemma:finite-multiplicity}. We can remedy 
this situation with the use of the central characters for $L_{J}$. 

\begin{prop}\label{prop:injective-finite}
If $I$ is any injective $P_J$-module with a finite-dimensional socle, then 
\[
I|_{L_J} = \bigoplus_{\nu \in X(T)} I_{L_J}(\nu)^{n_{\nu}}
\]
where $n_{\nu} < \infty$ for all $\nu \in X(T)$. 
\end{prop}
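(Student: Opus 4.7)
The plan is to reduce to the case of a single indecomposable injective hull $I_{P_J}(\lambda)$, invoke the restriction formula $I_{P_J}(\lambda)|_{L_J}\cong I_{L_J}(\lambda)\otimes k[U_J]$ noted just before the proposition, and then combine the central-character decomposition of $k[U_J]$ from \cref{lemma:coordinate-decomposition} with \cref{lemma:finite-multiplicity} to bound each multiplicity $n_\nu$.

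First I would handle the reduction. Since $I$ is injective with finite-dimensional socle $\bigoplus_j L_{P_J}(\lambda_j)^{c_j}$ (with $c_j<\infty$ and finitely many $j$), standard facts about injective rational modules identify $I$ with the injective hull of its socle, so $I\cong\bigoplus_j I_{P_J}(\lambda_j)^{c_j}$. It therefore suffices to treat $I=I_{P_J}(\lambda)$. Restricting to $L_J$ and applying \cref{lemma:coordinate-decomposition} yields
\[
I|_{L_J}\;\cong\;I_{L_J}(\lambda)\otimes k[U_J]\;\cong\;\bigoplus_{\chi} I_{L_J}(\lambda)\otimes k[U_J]_\chi,
\]
with every $k[U_J]_\chi$ finite-dimensional and preserved by the conjugation action of $L_J$. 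Each summand is thus the tensor product of an injective $L_J$-module with a finite-dimensional one, hence is itself injective over $L_J$; by \cref{lemma:finite-multiplicity} it decomposes as $\bigoplus_\nu I_{L_J}(\nu)^{m^\chi_\nu}$ with every $m^\chi_\nu$ finite.

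The remaining task is to show that only one value of $\chi$ contributes to $n_\nu=\sum_\chi m^\chi_\nu$, which is where I would use the $X(Z_J)$-grading. Any rational $L_J$-module splits as a direct sum of $Z_J$-isotypic components, and since $Z_J$ is central in $L_J$ this splitting is a decomposition as $L_J$-modules. Consequently any indecomposable rational $L_J$-module -- in particular $I_{L_J}(\lambda)$ and each $I_{L_J}(\nu)$ -- must lie in a single central-character block, namely the image $\bar\lambda$, respectively $\bar\nu$, of its highest weight under $X(T)\twoheadrightarrow X(T)/\mathbb{Z}J=X(Z_J)$. The summand $I_{L_J}(\lambda)\otimes k[U_J]_\chi$ then lives in the block $\bar\lambda+\chi$, so $I_{L_J}(\nu)$ can appear in it only when $\chi=\bar\nu-\bar\lambda$. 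Hence $n_\nu=m^{\bar\nu-\bar\lambda}_\nu<\infty$.

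The main obstacle is precisely this central-character bookkeeping: \cref{lemma:finite-multiplicity} only controls multiplicities inside a \emph{single} finite-dimensional twist $I_{L_J}(\lambda)\otimes k[U_J]_\chi$, while $n_\nu$ is \emph{a priori} a sum over infinitely many such twists. What makes the argument go through is the observation that indecomposable injective $L_J$-modules have a single $Z_J$-character, so the $X(Z_J)$-grading afforded by \cref{lemma:coordinate-decomposition} pins down the unique $\chi$ that can possibly contribute to a given $I_{L_J}(\nu)$.
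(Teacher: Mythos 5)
Your proposal is correct and follows essentially the same route as the paper: reduce to $I = I_{P_J}(\lambda)$, invoke the restriction formula $I_{P_J}(\lambda)|_{L_J}\cong I_{L_J}(\lambda)\otimes k[U_J]$, decompose $k[U_J]$ by $Z_J$-characters via \cref{lemma:coordinate-decomposition}, and use the central-character grading to isolate a single finite-dimensional twist $I_{L_J}(\lambda)\otimes k[U_J]_\chi$ to which \cref{lemma:finite-multiplicity} applies. The only cosmetic difference is that the paper computes the socle multiplicity $\dim\textup{Hom}_{L_J}(L_J(\mu),I_{L_J}(\lambda)\otimes k[U_J])$ directly and observes that the central-character constraint collapses the $k[U_J]$ factor to $k[U_J]_{\mu-\lambda}$, while you first decompose into injective summands over all $\chi$ and then argue that only $\chi=\bar\nu-\bar\lambda$ can contribute; these are the same bookkeeping, just phrased on the module side versus the Hom side.
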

\begin{proof}
Without loss of generality, one can assume that $I = I_{P_J}(\lambda)$ for some $\lambda \in X(T)$ and hence 
$I = I_{L_J}(\lambda) \otimes k[U_J]$. It will be sufficient to show that for any weight $\mu \in X(T)$ dominant for $L_J$,
$$\dim \text{Hom}_{L_J}(L_J(\mu), I_{L_J}(\lambda)\otimes k[U_J]) < \infty.$$ 

Since $I_{L_J}(\lambda)$ and $L_J(\mu)$ are indecomposable $L_J$-modules, then $Z_J$ acts on the former by 
the character $[\lambda]= \lambda + \mathbb{Z}J$ and on the latter by $[\mu] = \mu + \mathbb{Z}J$. Therefore, 
\[
\text{Hom}_{L_J}(L_J(\mu), I_{L_J}(\lambda)\otimes k[U_J]) = 
	\text{Hom}_{L_J}(L_J(\mu), I_{L_J}(\lambda)\otimes k[U_J]_{\mu-\lambda})
\]
because any $L_J$-homomorphism must preserve the $X(Z_J)$ weight spaces. 
The proposition immediately follows from  Lemmas~\ref{lemma:coordinate-decomposition} and \ref{lemma:finite-multiplicity}.
\end{proof}

\subsection{Parabolic subgroups}  Let $F:P_{J}\rightarrow P_{J}$ be the Frobenius morphism and $(P_J)_rL_J=(F^{r})^{-1}(L_{J})$. 
The following result investigates $P_{J}$-modules which are injective over $(P_J)_rL_J=(F^{r})^{-1}(L_{J})$ and have finite socles. 

\begin{theorem}\label{thm:parabolic}
If $M$ is a $P_J$-module with a finite-dimensional socle which is injective over $(P_J)_rL_J$ for all $r\geq 1$, then $M$ is injective over $P_J$. 
\end{theorem}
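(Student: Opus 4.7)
The plan is to show that the inclusion $M \hookrightarrow I := I_{P_J}(M)$ is an equality. Since $(P_J)_r L_J / L_J \cong (U_J)_r$ is a finite (hence affine) group scheme, $L_J$ is exact in $(P_J)_r L_J$ by \cite[I.5.13(b)]{J}, and restriction from $(P_J)_r L_J$-modules to $L_J$-modules therefore sends injectives to injectives. In particular, $M|_{L_J}$ is $L_J$-injective. Because $\text{soc}_{P_J}(M)$ is finite-dimensional, $I$ is a finite direct sum of indecomposable injectives $I_{P_J}(\lambda_i)$, and Proposition~\ref{prop:injective-finite} gives $I|_{L_J} = \bigoplus_\nu I_{L_J}(\nu)^{n_\nu}$ with every $n_\nu < \infty$. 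The $L_J$-injective submodule $M|_{L_J}$ is thus an $L_J$-direct summand of $I|_{L_J}$, of the form $\bigoplus_\nu I_{L_J}(\nu)^{k_\nu}$ with $k_\nu \le n_\nu$; the $L_J$-complement is isomorphic to $Q|_{L_J}$ for $Q := I/M$ and is itself $L_J$-injective with finite $L_J$-multiplicities. Moreover, the injectivity of $M$ over each $(P_J)_r L_J$ splits $0 \to M \to I \to Q \to 0$ as $(P_J)_r L_J$-modules for every $r$, so $Q$ is also $(P_J)_r L_J$-injective for all $r \ge 1$.

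The remaining task is to show $Q = 0$, equivalently $\Ext^1_{P_J}(L_{P_J}(\mu), M) = 0$ for every simple $L_{P_J}(\mu)$. Since $L_{P_J}(\mu) = L_J(\mu)$ inflated with trivial $U_J$-action, the Lyndon--Hochschild--Serre spectral sequence for $U_J \triangleleft P_J$ produces the five-term exact sequence
\[
0 \to \Ext^1_{L_J}(L_J(\mu), M^{U_J}) \to \Ext^1_{P_J}(L_{P_J}(\mu), M) \to \Hom_{L_J}(L_J(\mu), H^1(U_J, M)),
\]
so it suffices to show (a) $M^{U_J}$ is $L_J$-injective and (b) $H^1(U_J, M) = 0$. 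The analogous spectral sequence for $L_J \ltimes (U_J)_r$, together with the $(P_J)_r L_J$-injectivity of $M$, gives $\Ext^1_{L_J}(L_J(\mu), M^{(U_J)_r}) = 0$ for every $r$ and $\mu$, so each finite-level $M^{(U_J)_r}$ is $L_J$-injective. For every finite-dimensional $L_J$-submodule $N \subseteq M$ one has $N^{(U_J)_r} = N^{U_J}$ for $r$ sufficiently large (a standard formal-argument about morphisms from $U_J$ constant on all Frobenius kernels), hence $M^{U_J} = \bigcap_r M^{(U_J)_r}$. Combining this with the finite $L_J$-multiplicities from Proposition~\ref{prop:injective-finite} and the identification $\text{soc}_{L_J}(M^{U_J}) = \text{soc}_{L_J}(I^{U_J}) = \text{soc}_{P_J}(M)$ (the second equality holds since $I^{U_J}=\bigoplus I_{L_J}(\lambda_i)^{m_i}$, and the first follows from $M^{U_J}\supseteq\text{soc}_{P_J}(M)\subseteq I^{U_J}$), the $L_J$-socles of the $M^{(U_J)_r}$ stabilize and pin $M^{U_J}$ down as the $L_J$-injective hull of its socle inside $I^{U_J}$, namely $M^{U_J} = I^{U_J}$; this is (a). For (b), the long exact $U_J$-cohomology sequence of $0 \to M \to I \to Q \to 0$ combined with $U_J$-injectivity of $I$ (using that $U_J$ is exact in $P_J$) identifies $H^1(U_J, M)$ with the cokernel of $I^{U_J} \to Q^{U_J}$; the parallel argument applied to $Q$ in place of $M$, which is also $(P_J)_r L_J$-injective for every $r$ and has finite $L_J$-multiplicities, then forces $Q^{U_J} = I^{U_J}/M^{U_J} = 0$ and hence $H^1(U_J, M) = 0$.

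The principal obstacle is the limit step in (a): converting $L_J$-injectivity at each Frobenius level $M^{(U_J)_r}$ into $L_J$-injectivity of the intersection $\bigcap_r M^{(U_J)_r}$. The decisive leverage is the finite-multiplicity decomposition from Proposition~\ref{prop:injective-finite}, which forces the $L_J$-socles to stabilize and be controlled by the finite $P_J$-socle of $M$; without this finiteness the proposed identification $M^{U_J} = I^{U_J}$ collapses, confirming the essential role of the finite socle hypothesis in the theorem.
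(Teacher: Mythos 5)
Your strategy -- run the Lyndon--Hochschild--Serre spectral sequence for $U_J\lhd P_J$ and reduce the theorem to (a) $M^{U_J}$ being $L_J$-injective and (b) $H^1(U_J,M)=0$ -- is genuinely different from the paper's, and the preliminary observations (exactness of $L_J$ in $(P_J)_rL_J$, hence $L_J$-injectivity of $M|_{L_J}$ and of each $M^{(U_J)_r}$, finite $L_J$-multiplicities via Proposition~\ref{prop:injective-finite}, the identity $M^{U_J}=\bigcap_r M^{(U_J)_r}$, and $\mathrm{soc}_{L_J}(M^{U_J})=\mathrm{soc}_{P_J}(M)=\mathrm{soc}_{L_J}(I^{U_J})$) are all correct. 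However, both (a) and (b) have real gaps.

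For (a): a decreasing intersection of $L_J$-injective submodules need not be $L_J$-injective, and the sentence asserting that ``the $L_J$-socles of the $M^{(U_J)_r}$ stabilize and pin $M^{U_J}$ down as \ldots\ $I^{U_J}$'' does not supply a proof. Each $M^{(U_J)_r}$ is injective and contains $\mathrm{soc}_{P_J}(M)$, so each contains \emph{some} copy of $I^{U_J}$, but those copies can move as $r$ varies; nothing in your argument forces a single copy of $I^{U_J}$ to lie inside all $M^{(U_J)_r}$ simultaneously, which is what $M^{U_J}\supseteq I^{U_J}$ would require. The finite-multiplicity bound $a_\nu^{(r)}\le n_\nu$ does give a decreasing, eventually-constant sequence of multiplicities for each $\nu$, but that controls isomorphism types of the $M^{(U_J)_r}$, not of their intersection. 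For (b): the long exact sequence gives $0\to M^{U_J}\to I^{U_J}\to Q^{U_J}\to H^1(U_J,M)\to 0$, so $I^{U_J}/M^{U_J}$ is only the \emph{image} in $Q^{U_J}$; asserting $Q^{U_J}=I^{U_J}/M^{U_J}$ is exactly the statement $H^1(U_J,M)=0$ that you are trying to prove, and ``the parallel argument applied to $Q$'' would at best return $Q^{U_J}=(I_Q)^{U_J}$ for the injective hull $I_Q$ of $Q$, which vanishes only if $Q=0$ -- again the desired conclusion. (It is also not clear that $Q$ has a finite $P_J$-socle, so it is not clear that $Q$ satisfies the running hypotheses at all.)

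The paper's proof avoids all of this by producing a \emph{lower} bound on $M$ rather than working with fixed-point submodules, which only bound it from above. Since $M$ is $(P_J)_rL_J$-injective and $I\otimes k[(U_J)_r]$ is the $(P_J)_rL_J$-injective hull of $\mathrm{soc}_{P_J}(M)\subseteq M$, one gets $(P_J)_rL_J$-embeddings $\phi_r\colon I\otimes k[(U_J)_r]\hookrightarrow M$. Combining $k[U_J]=\bigcup_r k[(U_J)_r]$ with the finite multiplicities of Proposition~\ref{prop:injective-finite} then shows that the $L_J$-multiplicity of each $I_{L_J}(\nu)$ in $M$ is at least $n_\nu$, hence exactly $n_\nu$, forcing $M=I\otimes k[U_J]$. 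That is the idea your fixed-point approach is missing, and I would recommend either reworking (a) and (b) to exploit these embeddings or adopting the paper's direct multiplicity comparison.
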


\begin{proof}
It can be deduced from the finite-dimensionality of $\text{soc}_{P_J}(M)$ that the $P_J$-injective hull of $M$ is of the form 
$I\otimes k[U_J]$ where $I$ is an injective $L_J$-module with a finite-dimensional socle, and we get an embedding 
\[
\iota: M \hookrightarrow I\otimes k[U_J].
\]
By Proposition~\ref{prop:injective-finite}, 
\[
(I\otimes k[U_J])|_{L_J} = \bigoplus_{\nu \in X(T)} I_{L_J}(\nu)^{n_{\nu}}
\]
with $n_{\nu} < \infty$ for all $\nu$. 

By assumption, $M|_{(P_J)_rL_J}$ is injective for all $r$, and thus there exist embeddings over $L_J$ (or $(P_J)_rL_J$) of the form
$\phi_r: I\otimes k[(U_J)_r] \hookrightarrow M$. For each $r$, let $M_r = \text{im}(\phi_r) \subseteq M \subseteq I\otimes k[U_J]$.
It will be sufficient to show that $I\otimes k[U_J] = \bigcup_r M_r$. 

The explicit descriptions of the coordinate algebras for $U$ and $U_{r}$ can be used to show that  $k[U_J] = \bigcup_r k[(U_J)_r]$ where each $k[(U_J)_r]$ can be canonically embedded inside $k[U_J]$ as a
$(P_J)_rL_J$-module. We can deduce that  $I\otimes k[U_J] = \bigcup_r I\otimes k[(U_J)_r]$. Thus, for any $\nu$ satisfying $n_{\nu} > 0$, 
there must exist some $r\geq 1$ such that $I_{L_J}(\nu)^{n_{\nu}} \mid M_r\subseteq M$. This forces $M = I\otimes k[U_J]$ 
 and therefore $M$ is injective as a $P_J$-module. 
\end{proof}

\subsection{Borel subgroups} The aforementioned theorem in the previous section has a particularly nice formulation in the case when $P_{J}=B$. 

\begin{cor}\label{prop:borel}
Let $M$ be any mock injective $B$-module which has a finite-dimensional socle, then $M$ is an injective $B$-module. 
\end{cor}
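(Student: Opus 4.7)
The plan is to invoke \cref{thm:parabolic} in the extremal case $J=\emptyset$. Then $P_J$ is the Borel $B$, the Levi factor $L_J$ equals the maximal torus $T$, and $(P_J)_rL_J = B_rT$. Since the corollary already assumes $\text{soc}_B(M)$ is finite-dimensional, the only gap between its hypotheses and those of \cref{thm:parabolic} is that mock injectivity of $M$ gives injectivity over each $B_r$, whereas the theorem demands injectivity over each $B_rT$. All the content of the proof therefore lies in upgrading $B_r$-injectivity to $B_rT$-injectivity.

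To perform this upgrade, I would use the Lyndon--Hochschild--Serre spectral sequence attached to the normal subgroup scheme $B_r\triangleleft B_rT$: for any $B_rT$-module $N$,
\[
E_2^{p,q}=H^p\bigl(B_rT/B_r,\,\text{Ext}^q_{B_r}(N,M)\bigr)\Longrightarrow \text{Ext}^{p+q}_{B_rT}(N,M).
\]
By the second isomorphism theorem, $B_rT/B_r\cong T/T_r$, which is again a torus and hence linearly reductive. Mock injectivity of $M$ forces $\text{Ext}^q_{B_r}(N,M)=0$ for all $q>0$, while linear reductivity of the torus quotient forces $H^p(B_rT/B_r,-)=0$ for all $p>0$. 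The spectral sequence therefore collapses trivially, yielding $\text{Ext}^i_{B_rT}(N,M)=0$ for every $i>0$ and every $B_rT$-module $N$; equivalently, $M$ is injective over $B_rT$.

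With injectivity of $M$ over each $B_rT$ in hand, \cref{thm:parabolic} applies directly and gives that $M$ is injective over $B$. The only substantive step is the passage from $B_r$-injectivity to $B_rT$-injectivity, which rests on the standard inflation-restriction argument together with the linear reductivity of the torus quotient. An alternative route, should one prefer to avoid spectral sequences, would be to note that the tensor identity gives $\textup{ind}_{B_r}^{B_rT}(M)\cong M\otimes k[T/T_r]$, and then to use the $T$-weight decomposition $k[T/T_r]=k\oplus k'$ (afforded by linear reductivity of $T/T_r$) to exhibit $M$ as a $B_rT$-direct summand of the $B_rT$-injective module $M\otimes k[T/T_r]$. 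Either way, no real obstacle arises beyond this standard reduction.
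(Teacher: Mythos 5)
Your proposal is correct and takes essentially the same approach as the paper: both reduce the corollary to \cref{thm:parabolic} with $J=\emptyset$ and observe that injectivity over $B_r$ is equivalent to injectivity over $B_rT$. The paper simply asserts this equivalence without justification, while you supply two valid proofs of it (the LHS spectral sequence for $B_r \triangleleft B_rT$ with linearly reductive quotient $T/T_r$, and the direct-summand argument via $\textup{ind}_{B_r}^{B_rT}k \cong k[T/T_r]$), so your write-up is the same argument with the implicit step made explicit.
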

\begin{proof}
In this situation, $B= P_{\emptyset}$ and $T=L_{\emptyset}$. The proof then follows by applying Theorem~\ref{thm:parabolic} 
and observing that a module $M$ is injective over $B_rT$ if and only if it is injective over $B_r$. 
\end{proof}

\subsection{Reductive groups} In this section we show that the analogue of Corollary~\ref{prop:borel} does not hold for reductive groups. 
Specifically, we will give an example of a module for $G=SL_2(k)$, with $k=\overline{\mathbb{F}_2}$, which 
is proper mock injective and has a simple socle. Let 
\[
W = \left \{ \begin{pmatrix} 
            1 & 0 \\
            0 & 1
            \end{pmatrix}, \,
            \begin{pmatrix}
            0 & 1 \\
            1 & 0
            \end{pmatrix}
           \right \} \subseteq SL_2(\mathbb{F}_2)
 \]
 denote the Weyl group. Since $2 \mid |W|$, then there exist non-trivial extensions of the trivial representation for $W$. 
 This implies that the module $\textup{ind}_W^G\, k = k[G]^W$ is proper mock injective by the same argument as in 
 Theorem~\ref{thm:main}. 
 Furthermore, the normalizer of the torus, $N_G(T) = WT$, is given by
 \[
 WT = \left \{ \begin{pmatrix} 
            a & 0 \\
            0 & a^{-1}
            \end{pmatrix}, \,
            \begin{pmatrix}
            0 & a \\
            a^{-1} & 0
            \end{pmatrix} \mid a \in k^*
           \right \}.
 \]
 
The following theorem demonstrates that $M = \textup{ind}_{WT}^G k$ is proper mock injective and has a simple socle. 

\begin{prop} Assume that the characteristic of $k$ is $2$. Let $G=SL_2$, and let $M=\textup{ind}_{WT}^G\, k$.  Then 
\begin{itemize}
\item[(a)] $\textup{soc}_G M = k$, 
\item[(b)] $M$ is proper mock injective for $G$. 
\end{itemize} 
\end{prop}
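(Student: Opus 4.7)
The plan is to prove the two parts separately.

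For (a), I would apply Frobenius reciprocity to obtain $\textup{Hom}_G(L(\lambda),M)\cong \textup{Hom}_{WT}(L(\lambda),k)$, and observe that a $WT$-map $L(\lambda)\to k$ must factor through the zero $T$-weight space of $L(\lambda)$ and be $W$-invariant there. Writing $\lambda = \sum_i 2^i\lambda_i$ with $\lambda_i\in\{0,1\}$, Steinberg's tensor product theorem gives $L(\lambda)\cong\bigotimes_{\lambda_i=1}L(1)^{(i)}$, whose $T$-weights are precisely the sums $\sum_{\lambda_i=1}\epsilon_i 2^i$ with $\epsilon_i\in\{\pm 1\}$. By uniqueness of binary expansions in $\mathbb{Z}$, such a sum vanishes only when the tensor product is empty, so $L(\lambda)_0\neq 0$ iff $\lambda=0$, giving $\textup{soc}_G M = L(0) = k$.

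For the first half of (b), non-injectivity over $G$, the plan is first to verify that $WT$ is exact in $G$---the quotient $G/WT$ may be identified with the complement of a smooth conic in $\textup{Sym}^2\mathbb{P}^1\cong\mathbb{P}^2$, hence is affine---and then apply generalized Frobenius reciprocity \cite[I.4.6]{J} to obtain $\textup{Ext}^*_G(k,M)\cong H^*(WT,k)$. Using $WT\cong T\rtimes W$ and the linear reductivity of $T$, the Hochschild--Serre spectral sequence collapses to give $H^n(WT,k)\cong H^n(W,k)$, which is nonzero in degree $1$ since $|W|=2=p$. Hence $M$ is not $G$-injective.

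For the second half of (b), $G_r$-injectivity, the strategy is to exhibit $M$ as a direct summand of $\textup{ind}_W^G k$, which is already known to be $G_r$-injective by Proposition~\ref{prop:chevalley-induction}(b) (since $W$ is finite and $F$ restricts to the identity on $W$). By transitivity of induction, $\textup{ind}_W^G k \cong \textup{ind}_{WT}^G(\textup{ind}_W^{WT}k)$. A direct computation identifies $\textup{ind}_W^{WT}k\cong k[T]$ as a $WT$-module, where $T$ acts by its regular representation and the Weyl involution acts by inversion. The $T$-invariants are the constants $k\subset k[T]$, and because $T$ is linearly reductive the $T$-weight decomposition of $k[T]$ is canonical and $W$-stable, so the projection onto the $T$-invariant part is $WT$-equivariant. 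Consequently
\[
\textup{ind}_W^{WT}k \;\cong\; k\oplus K
\]
as $WT$-modules. Applying $\textup{ind}_{WT}^G$ yields $\textup{ind}_W^G k \cong M\oplus \textup{ind}_{WT}^G K$, so $M$ is a direct summand of a $G_r$-injective module and is therefore itself $G_r$-injective.

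The main obstacle is obtaining the $WT$-equivariant splitting of $\textup{ind}_W^{WT}k$; once the identification with $k[T]$ is made, the linear reductivity of $T$ makes this automatic.
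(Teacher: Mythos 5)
Your proof is correct and takes essentially the same route as the paper. For (a), both apply Frobenius reciprocity and then show that $L(\lambda)$ has no zero $T$-weight when $\lambda>0$; you do this via the factorization $L(\lambda)\cong\bigotimes L(1)^{(i)}$ from Steinberg's theorem together with a signed-sum argument, while the paper runs an induction on $\lambda$ through $L(\lambda)=L(\lambda/2)^{(1)}$. These are the same idea packaged differently: what makes your signed sums nonzero is that the top power $2^{i_k}$ strictly exceeds the sum of all the smaller ones, which is also what drives the paper's induction. For (b), both establish non-injectivity by identifying $\text{Ext}^i_G(k,M)$ with $H^i(W,k)\neq 0$ for some $i>0$, and both establish $G_r$-injectivity by realizing $M$ as a $G$-direct summand of $\text{ind}_W^G k$ via transitivity of induction, the identification $\text{ind}_W^{WT}k\cong k[T]$, and the splitting off of the zero $T$-weight space as a $WT$-summand.

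One point you make explicit that the paper glosses over: the isomorphism $\text{Ext}^i_G(k,\text{ind}_{WT}^G k)\cong\text{Ext}^i_{WT}(k,k)$ is generalized Frobenius reciprocity and therefore requires $WT=N_G(T)$ to be exact in $G$, i.e.\ $G/WT$ affine; the paper folds the entire chain of isomorphisms into a ``Lyndon--Hochschild--Serre spectral sequence argument'' without addressing exactness. Your geometric justification does have a small slip, though: over $k=\overline{\mathbb{F}_2}$ a binary quadratic $ax^2+bxy+cy^2$ is a perfect square precisely when $b=0$, so the image of the diagonal in $\textup{Sym}^2\mathbb{P}^1\cong\mathbb{P}^2$ is the line $\{b=0\}$, not a smooth conic. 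Its complement is $\mathbb{A}^2$, so $G/WT$ is still affine and the conclusion you need holds.
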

\begin{proof} (a) In this situation, one can identify $X(T) = \mathbb{Z}$ and $\mathbb{Z}[X(T)] = \mathbb{Z}[q,q^{-1}]$.  We will first show by induction that the simple module
 $L(\lambda)$ does not contain a vector of weight $0$ for any $\lambda > 0$. For the base case, observe that 
 \[
 \text{ch } L(1) = q + q^{-1}
 \]
 and thus doesn't contain a $0$-weight vector. Now suppose that $\lambda \geq 2$ and that the result holds for all $\mu < \lambda$. 
 If $\lambda$ is odd, then all the weights appearing in $L(\lambda)$ must also be odd, thus $0$ cannot appear as a weight. Conversely,
 if $\lambda$ is even, then by Steinberg's tensor product theorem $L(\lambda) = L(\lambda/2)^{(1)}$ and we can apply
 the inductive hypothesis to $\lambda/2 < \lambda$. 
 
 Therefore, (a) follows because for any $\lambda > 0$, 
 \[
 \text{Hom}_G(L(\lambda), M) \cong \text{Hom}_{WT}(L(\lambda), k) \subseteq \text{Hom}_T(L(\lambda),k) = 0
 \]
 and 
 \[
 \text{Hom}_G(k, M) \cong \text{Hom}_{WT}(k, k)\cong k.
 \]

(b) Next we show that $M$ is proper mock injective. Since $WT/T \cong W$, it follows from a Lyndon-Hochschild-Serre spectral sequence argument that 
for some $i > 0$,
$$\text{Ext}^i_G(k,M) \cong \text{Ext}^i_{WT}(k,k) \cong   \text{H}^i(W, k^T) = \text{Ext}^i_W(k,k) \neq 0.$$
Therefore, $M$ is not injective, and in fact it doesn't even have a good filtration. 

It remains to prove that $M$ is mock injective, this will follow by showing that $M$ is a summand
of the mock injective module  $\text{ind}_W^G\, k$. To see why this is true, first observe that 
\[
\text{ind}_W^G\, k = \text{ind}_{WT}^G\, \text{ind}_{W}^{WT}\, k.
\]
Let $N = \text{ind}_{W}^{WT}\, k$.  It can be verified that $N = k[T]$ where $T$ acts as the left regular representation and $W \cong WT/T$
acts by conjugation. Now $k[T]$ is an injective $T$-module and the zero weight space under this action appears exactly once. Furthermore, the zero weight 
space is a summand of $k[T]$ under $W$.  Thus we have $N \cong k \oplus S$ for some $WT$-module $S$.  It follows that $\text{ind}_{W}^G\, k = M \oplus \text{ind}_{WT}^G\, S$, and therefore that $M$ is mock injective. 
\end{proof}

\subsection{Mock injectives with good filtrations}
As exhibited above, for $G$ reductive, a proper mock injective $G$-module may have a simple socle.  We will now show that this cannot occur if we also assume that $M$ has a good filtration.

Let $M$ be a mock injective $G$-module with simple $G$-socle $L(\lambda)$, and choose $r_0$ such that $\lambda \in X_{r_0}(T)$.  For any $r \ge r_0$ it follows that the simple $G_r$-module $L_r(\lambda)$ is the only iso-class appearing in $\text{soc}_{G_r}(M)$, and the only iso-class appearing in $\text{soc}_{G_r} \, I(\lambda)$.  Let $Q_r(\lambda)$ denote the $G_r$-injective hull of $L_r(\lambda)$.  As a consequence of the previous remarks, the only $G_r$-summands appearing in each of these modules is $Q_r(\lambda)$.  Since $M$ is injective over $G_r$, there is a splitting $I(\lambda) \rightarrow M$.  This shows that $I(\lambda)/M$ is isomorphic over $G_r$ to a direct sum of the modules $Q_r(\lambda)$, hence that $L_r(\lambda)$ is the only iso-class appearing in $\text{soc}_{G_r}(I(\lambda)/M)$.  Now this holds for all $r \ge r_0$, so if $L(\mu)$ appears in $\text{soc}_G(I(\lambda)/M)$ for some $\mu$, we can pick $r$ large enough that $\mu \in X_r(T)$, in which case $L_r(\mu)$ appears in the $G_r$-socle of $I(\lambda)/M$.  Thus $L_r(\mu) \cong L_r(\lambda)$, and since $\mu,\lambda \in X_r(T)$, it follows that $\mu=\lambda$.  We have therefore proved:

\begin{lemma}\label{lemma:quotientsocle}
If $M$ is a mock injective $G$-module having simple socle $L(\lambda)$, then $L(\lambda)$ is the only iso-class of simple $G$-module appearing in $\text{soc}_G(I(\lambda)/M)$.
\end{lemma}

We recall that for any $G$-module $N$, the injectivity of $I(\lambda)$ and the short exact sequence
\[
0 \rightarrow M \rightarrow I(\lambda) \rightarrow I(\lambda)/M \rightarrow 0
\]
gives rise to the exact sequence
\[
0 \rightarrow \text{Hom}_G(N, M) \rightarrow \text{Hom}_G(N, I(\lambda)) \rightarrow 
\text{Hom}_G(N, I(\lambda)/M) \rightarrow \text{Ext}^1_G(N, M) \rightarrow 0.
\]
From this we can deduce the following result.  We recall that $V(\lambda)$ is the Weyl module with highest weight $\lambda$.

\begin{theorem}
If $M$ is a mock injective $G$-module with a simple socle, then $M$ is injective if and only if it has a good filtration, in which case $M \cong I(\lambda)$ for some $\lambda \in X(T)_+$.
\end{theorem}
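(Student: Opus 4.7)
The plan is to combine the four-term exact sequence displayed just before the theorem, applied to $N = L(\lambda)$, with the Ext-vanishing supplied by a good filtration. The easy direction is immediate: if $M$ is injective with simple socle $L(\lambda)$, then $M \cong I(\lambda)$, which has a good filtration.

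For the converse I would assume $M$ has a good filtration and aim to show $I(\lambda)/M = 0$. The relevant input is the standard Ext-criterion for good filtrations, which gives $\Ext^{1}_{G}(V(\mu),M)=0$ for every dominant $\mu$. Applied to the radical short exact sequence
\[
0 \to K \to V(\lambda) \to L(\lambda) \to 0,
\]
where $K = \operatorname{rad}(V(\lambda))$, this yields a surjection
\[
\Hom_{G}(K,M) \twoheadrightarrow \Ext^{1}_{G}(L(\lambda),M).
\]

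The main step is then to show $\Hom_{G}(K,M) = 0$. The composition factors of $K$ are of the form $L(\mu)$ with $\mu < \lambda$ strictly, while $\text{soc}_{G}(M) = L(\lambda)$. A nonzero map $\phi \colon K \to M$ would have nonzero image in $M$, and since any nonzero rational $G$-module has nonzero socle, this image would contain a copy of $L(\lambda)$; but the image is a quotient of $K$, so $L(\lambda)$ cannot be among its composition factors. This contradiction gives $\Hom_{G}(K, M) = 0$, hence $\Ext^{1}_{G}(L(\lambda), M) = 0$.

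I would then feed this into the four-term sequence with $N = L(\lambda)$:
\[
0 \to \Hom_{G}(L(\lambda), M) \to \Hom_{G}(L(\lambda), I(\lambda)) \to \Hom_{G}(L(\lambda), I(\lambda)/M) \to \Ext^{1}_{G}(L(\lambda), M) \to 0.
\]
The first two terms are one-dimensional, so the vanishing of the last term forces $\Hom_{G}(L(\lambda), I(\lambda)/M) = 0$. Combined with the paragraph preceding the theorem, which shows that $L(\lambda)$ is the only simple iso-class appearing in $\text{soc}_{G}(I(\lambda)/M)$, this forces $\text{soc}_{G}(I(\lambda)/M) = 0$, hence $I(\lambda)/M = 0$, completing the proof. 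I expect the main obstacle to be the vanishing of $\Hom_{G}(K, M)$; although the argument is merely a socle chase, one must be careful because $M$ may be infinite-dimensional, so the argument needs to use that every nonzero rational $G$-module is locally finite and hence has nonzero socle.
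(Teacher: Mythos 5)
Your proof is correct, but it follows a different route from the paper's. The paper applies the four-term sequence with $N = V(\mu)$ for arbitrary dominant $\mu$ to conclude (via the Ext-criterion \cite[II.4.17]{J}) that $I(\lambda)/M$ itself has a good filtration; it then invokes the fact \cite[II.4.18]{J} that $H^0(\lambda)$ occurs with multiplicity one in a good filtration of $I(\lambda)$, so that $L(\lambda) \subseteq I(\lambda)/M$ would force $H^0(\lambda)$ to appear in good filtrations of both $M$ and $I(\lambda)/M$, a contradiction. You instead specialize the four-term sequence to $N = L(\lambda)$ and compute $\Ext^1_G(L(\lambda),M) = 0$ directly by dimension-shifting along $0 \to \operatorname{rad} V(\lambda) \to V(\lambda) \to L(\lambda) \to 0$ and a socle chase: since $\operatorname{soc}_G(M) = L(\lambda)$ while every composition factor of $\operatorname{rad} V(\lambda)$ is some $L(\mu)$ with $\mu < \lambda$, any nonzero map $\operatorname{rad} V(\lambda) \to M$ would produce a nonzero image that both must and cannot contain $L(\lambda)$. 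Both arguments are valid and of comparable length. The paper's version is self-contained about the socle (it only uses the preceding paragraph for the containment $L(\lambda) \subseteq I(\lambda)/M$ when that quotient is nonzero), while yours leans more directly on the preceding paragraph to rule out other simples in $\operatorname{soc}_G(I(\lambda)/M)$; on the other hand, your version avoids invoking the good-filtration criterion for $I(\lambda)/M$ and the multiplicity statement \cite[II.4.18]{J}, replacing them with a more elementary weight-comparison argument on $\operatorname{rad} V(\lambda)$. Your caution about infinite-dimensionality is well placed and correctly resolved by local finiteness of rational modules.
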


\begin{proof}
The ``only if" direction is clear since injective modules always have a good filtration.  On the other hand, assume that $M$ has a good filtration, and embed $M$ into its injective hull.  Since $M$ has a simple socle, its injective hull is of the form $I(\lambda)$ for some $\lambda$.  By \cite[II.4.18]{J}, any good filtration of $I(\lambda)$ has $H^0(\lambda)$ appearing with multiplicty one.

For each $V(\mu)$ we have that $\text{Ext}^1_G(V(\mu), M) = 0$, giving a short exact sequence
\[
0 \rightarrow \text{Hom}_G(V(\mu), M) \rightarrow \text{Hom}_G(V(\mu), I(\lambda)) \rightarrow 
\text{Hom}_G(V(\mu), I(\lambda)/M) \rightarrow 0
\]
showing that $\dim_k (\text{Hom}_G(V(\mu), I(\lambda)/M)) < \infty$.  By \cite[II.4.17]{J}, $I(\lambda)/M$ has a good filtration.  If $I(\lambda)/M \ne 0$, then by Lemma~\ref{lemma:quotientsocle}, $L(\lambda) \subseteq I(\lambda)/M$.  Consequently, $H^0(\lambda) \subseteq I(\lambda)/M$ and $H^0(\lambda) \subseteq M$, so it occurs with multiplicity $2$ in a good filtration of $I(\lambda)$, a contradiction.  Thus, $I(\lambda)/M = 0$.
\end{proof}

Finally, although $M$ in general cannot be shown to have a good filtration (for this would contradict the example for $SL_2$ given earlier), we can at least say that $M$ starts out with the beginnings of a good filtration.

\begin{theorem}
Let $M$ be a mock injective $G$-module whose socle is isomorphic to $L(\lambda)$.  Then $H^0(\lambda)$ is isomorphic to a submodule of $M$.
\end{theorem}

\begin{proof}
By \cite[Lemma II.4.15]{J}, it suffices to show that
$$\text{Ext}_G^1(V(\mu),M) = 0$$
for all $\mu < \lambda$.  The sequence
\[
0 \rightarrow \text{Hom}_G(V(\mu), M) \rightarrow \text{Hom}_G(V(\mu), I(\lambda)) \rightarrow 
\text{Hom}_G(V(\mu), I(\lambda)/M) \rightarrow \text{Ext}^1_G(V(\mu), M) \rightarrow 0
\]
shows that $\text{Hom}_G(V(\mu), I(\lambda)/M)$ surjects onto $\text{Ext}^1_G(V(\mu), M)$.  However, $L(\lambda)$ is not a composition factor of $V(\mu)$ since $\lambda > \mu$.  By Lemma~\ref{lemma:quotientsocle}, any simple $G$-submodule of $I(\lambda)/M$ is isomorphic to $L(\lambda)$.  Therefore,  $\text{Hom}_G(V(\mu), I(\lambda)/M) = 0$, and so $\text{Ext}^1_G(V(\mu), M)$ must also be $0$.
\end{proof}

\end{document}